\titleformat*{\section}{\normalsize\bfseries}
\newcommand{\dist}{\mathrm{dist}}
\newcommand{\R}{\mathbb{R}}
\newcommand{\Z}{\mathbb{Z}}
\newcommand{\Sym}{\mathfrak{S}}
\theoremstyle{theorem}
\newtheorem{theorem}{Theorem}
\newtheorem{lemma}[theorem]{Lemma}
\newtheorem{problem}[theorem]{Problem}
\begin{document}

\title{
	Barycenters of points that are constrained to a polytope skeleton
}

\author{\scshape\normalsize Pavle V. M. Blagojevi\'c\thanks{Inst.\ Mathematik, FU Berlin, Arnimallee 2, 14195 Berlin, Germany, \href{mailto:blagojevic@math.fu-berlin.de}{blagojevic@math.fu-berlin.de} and \href{mailto:ziegler@math.fu-berlin.de}{ziegler@math.fu-berlin.de}. This research was funded by the European Research Council via ERC grant agreement no.~247029 \emph{SDModels}.}
\and
\scshape\normalsize Florian Frick\setcounter{footnote}{6}\thanks{Inst.\ Mathematik, MA 8-1, TU Berlin, Str. des 17. Juni 136, 10623 Berlin, Germany, \href{mailto:frick@math.tu-berlin.de}{frick@math.tu-berlin.de}. Research funded by the German Science Foundation DFG via the Berlin Mathematical School.}
\and
\scshape\normalsize G\"unter M. Ziegler$^*$}

\date{17. November 2014}

\maketitle

\setstretch{1.24}

\begin{abstract}\noindent
	We give a short and simple proof of a recent result of Dobbins that any point in an $nd$-polytope is the barycenter of $n$ points in the $d$-skeleton. This new proof builds on the constraint method that we recently introduced to prove Tverberg-type results.
\end{abstract}

A problem originally motivated by mechanics is to determine whether each point in a polytope is the barycenter of points in an appropriate skeleton of the polytope \cite{barba2014weight}. Its resolution by Dobbins recently appeared in Inventiones.

\begin{theorem}[Dobbins \cite{dobbins2014point}]\label{thm:dobbins}
	For any $nd$-polytope $P$ and for any point $p \in P$, there are points $p_1, \dots , p_n$ in the $d$-skeleton of $P$ with barycenter $p = \tfrac{1}{n}(p_1 +\dots + p_n)$.
\end{theorem}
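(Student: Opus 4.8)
The plan is to recast the statement as a \emph{fiber} problem and then to resolve it by the constraint method. Writing $P \subseteq \R^{nd}$ and fixing $p$, I would consider the polytope
\[
 Q_p \ = \ \bigl\{ (p_1,\dots,p_n) \in P^n \ : \ p_1 + \dots + p_n = np \bigr\},
\]
the fiber of the affine, surjective barycenter map $P^n \to P$ over $p$. The assertion is precisely that $Q_p$ meets $K^n$, where $K$ is the $d$-skeleton of $P$. Since the image of $K^n$ under the barycenter map is compact, it suffices to treat $p$ in the interior of $P$ and pass to the closure, and I would perturb $P$ into general position so that fibers behave generically. A first, purely linear-algebraic observation is encouraging: at a vertex of $Q_p$ the tight facet constraints, together with the barycenter equation, must have full rank $n^2d$, and since the tight normals at $p_i$ span the normal space of its minimal face $F_i$, this forces $\sum_i (nd-\dim F_i)\ge nd(n-1)$, i.e.\ $\sum_i \dim F_i \le nd$.

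This inequality is, however, only a \emph{total} bound and does not by itself place each $p_i$ in the $d$-skeleton: the budget $nd$ could be spent entirely on one point lying deep in the interior. The missing ingredient is a \emph{balancing} mechanism, and this is exactly what the symmetry of the barycenter condition supplies. I would exploit the $\Sym_n$-action permuting the coordinates $(p_1,\dots,p_n)$ and set up an $\Sym_n$-equivariant test map whose zeros correspond to admissible configurations: the condition $\sum_i (p_i - p) = 0$ is read off from the sum-zero subrepresentation of $(\R^{nd})^{\,n}$, while the requirement that each $p_i$ lie in the $d$-skeleton is imposed as an additional constraint coordinate, in the spirit of the constraint method. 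With the constraint built in, existence of the desired tuple becomes equivalent to the nonexistence of a certain $\Sym_n$-equivariant map from the configuration space to a sphere, precisely the kind of statement underlying the topological Tverberg and van Kampen--Flores theorems; the dimension bookkeeping ($n$ points, a $d$-dimensional skeleton, an $nd$-dimensional target) is arranged to match their threshold.

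The main obstacle, and the reason a naive degree argument fails, is that the $d$-skeleton $K$ of an $nd$-polytope is \emph{not} a cycle: a $(d-1)$-face typically lies in more than two $d$-faces, so $K$ carries no fundamental class and the map $K^n \to P$ has no well-defined degree. The crux is therefore to encode ``$x$ lies in the $d$-skeleton'' as a genuine continuous equivariant constraint, for instance through the codimension of the minimal face of $x$ or through $\dist(x,K)$ on the relevant strata, so that the combined test map is equivariant and the target representation has exactly the dimension needed for a Borsuk--Ulam-type obstruction to bite. Once this is set up correctly I expect the conclusion to follow from the nonexistence of the relevant equivariant map, with the vertex inequality $\sum_i \dim F_i \le nd$ controlling the generic local structure of the fibers; the remaining work is the routine limiting argument that removes the general-position and interior-point assumptions.
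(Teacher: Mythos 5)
Your starting point is exactly the paper's (and Dobbins's) key idea: intersect with the test space in the \emph{domain}, i.e.\ work inside the fiber $Q_p = P^n \cap \{(p_1,\dots,p_n) : \sum p_i = np\}$, which after translating $p$ to the origin is the polytope $C := P^n \cap W_n^{\oplus nd}$, where $W_n^{\oplus nd} = \{(x_1,\dots,x_n)\in\R^{nd\times n} : \sum x_i = 0\}$. Your vertex computation is also correct, and in fact yields more than you claim: since $\sum_i \dim F_i \le nd$ at a vertex, not all $F_i$ can have dimension $\ge d+1$, so at every \emph{vertex} of $C$ at least one coordinate already lies in the $d$-skeleton. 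But the proposal stalls precisely where the proof has to happen: you never say what the domain of your equivariant test map is, and that choice is the entire content of the argument. The full fiber $C$ cannot serve: at the point $(0,\dots,0) \in C$ all distances to the skeleton are equal and positive, so a distance-equalizing map has a zero there that proves nothing. The missing idea is to take as configuration space the $(n-1)$-skeleton $C^{(n-1)}$ of the fiber polytope. Your vertex count extends verbatim to faces: every face of $C$ has the form $(\sigma_1\times\dots\times\sigma_n)\cap W_n^{\oplus nd}$ with $\sigma_i$ faces of $P$, and if no $x_i$ lies in $P^{(d)}$, then all $\dim\sigma_i \ge d+1$ and the minimal face containing $(x_1,\dots,x_n)$ has dimension at least $n(d+1)-nd = n$; hence \emph{every} point of $C^{(n-1)}$ has some coordinate in $P^{(d)}$. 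Moreover $C^{(n-1)}$ is homotopy equivalent to a wedge of $(n-1)$-spheres, hence $(n-2)$-connected --- exactly the connectivity the target requires.

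Two further corrections are needed to make your test map coherent. First, the barycenter condition cannot be ``read off from the sum-zero subrepresentation'': the map $(p_1,\dots,p_n)\mapsto \tfrac{1}{n}\sum p_i - p$ is $\Sym_n$-\emph{invariant}, so it lands in the trivial representation; this is precisely why the naive configuration space/test map scheme fails, and why the barycenter condition must live entirely in the domain (as your $Q_p$ already ensures), not in the target. The only test map is then the one you mention in passing: $(x_1,\dots,x_n)\mapsto(\dist(x_1,P^{(d)}),\dots,\dist(x_n,P^{(d)}))$, projected to $W_n \cong \R^{n-1}$; your alternative candidate, the codimension of the minimal face, is not continuous and cannot be used. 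A zero of this map means all distances are equal, and since some coordinate lies in the closed set $P^{(d)}$, all of them do. Second, the topological input is Dold's theorem, which requires a \emph{free} action on the target minus the origin. The group $\Sym_n$ does not act freely on $W_n\setminus\{0\}$ (for $n\ge 3$ transpositions have nonzero fixed vectors), so one restricts to the subgroup $\Z/n$ with $n$ \emph{prime}, where the action is free and the $(n-2)$-connectivity of $C^{(n-1)}$ against $\dim W_n = n-1$ is exactly Dold's hypothesis; general $n$ is then handled by iterating over prime divisors. Your appeal to ``the kind of statement underlying the topological Tverberg theorem'' conceals this reduction, which cannot be skipped, since such statements are only available at primes (or prime powers). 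Finally, once set up this way the argument is exact: no general position, perturbation, or limiting step is needed --- boundary points are handled simply by restricting to the face of $P$ containing $p$ in its relative interior.
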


Here we simplify the proof of this theorem by using the idea of equalizing distances of points to a certain unavoidable skeleton via equivariant maps to force them into the skeleton. We recently used this setup to give simple proofs of old and new Tverberg-type results~\cite{blagojevic2014tverberg}. Thus we obtain the following slight generalization of Theorem~\ref{thm:dobbins}.

\begin{theorem}\label{thm}
	Let $P$ be a $d$-polytope, $p \in P$, and $k$ and $n$ positive integers with $kn \ge d$. Then there are points $p_1,\dots , p_n$ in the $k$-skeleton $P^{(k)}$ of $P$ with barycenter $p = \tfrac{1}{n}(p_1 +\dots + p_n)$.
\end{theorem}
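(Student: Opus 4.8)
The plan is to recast the statement as the existence of a zero of a barycenter map and then to push the $n$ points onto the skeleton by the constraint method. After normalizing to look for $p_1,\dots,p_n$ with $p_1+\dots+p_n=np$, note that since $P^{(k)}$ contains every vertex of $P$, the averaging map $(p_1,\dots,p_n)\mapsto\tfrac1n\sum_i p_i$ on $(P^{(k)})^n$ already has image $\operatorname{conv}(P^{(k)})=P$; the content of the theorem is therefore that the Minkowski average $\tfrac1n(P^{(k)}+\dots+P^{(k)})$ equals \emph{all} of $P$. A degrees-of-freedom count makes the hypothesis transparent: $n$ points in $P$ carry $nd$ parameters, fixing the barycenter removes $d$, and confining each point to the $k$-dimensional skeleton removes $d-k$ more per point, leaving $nd-d-n(d-k)=nk-d$ parameters. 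So $nk\ge d$ is exactly the threshold at which solutions are expected, signalling that a topological argument is needed to close the gap.

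The approach I would take is to let each $p_i$ range over all of $P$ rather than over $P^{(k)}$, and to encode membership in the skeleton through the continuous function $\phi(x)=\dist(x,P^{(k)})$, which vanishes precisely on $P^{(k)}$. On the configuration space $X=\{(p_1,\dots,p_n)\in P^n:\ \sum_i p_i=np\}$, on which $\Sym_n$ acts by permuting coordinates, I would assemble an $\Sym_n$-equivariant test map from the distance vector $(\phi(p_1),\dots,\phi(p_n))\in\R^n$, exploiting the permutation action of $\Sym_n$ on $\R^n$ and its sum-zero subrepresentation $W=\{x\in\R^n:\sum_i x_i=0\}$. Equalizing distances is the search for a configuration on which this map vanishes: one with the correct barycenter, built into $X$, all of whose points sit at one common distance from the unavoidable $k$-skeleton. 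The topological engine would be a Borsuk--Ulam-type nonexistence statement, to the effect that no $\Sym_n$-equivariant map from a suitable sphere associated with $X$ to the unit sphere of a real $\Sym_n$-representation of dimension governed by $d$ and $k$ can exist; the threshold for this nonexistence should be exactly $(n-1)d\ge n(d-k)$, that is, $nk\ge d$.

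The step I expect to be the main obstacle is upgrading such a topologically guaranteed zero from \emph{equalized} distances to \emph{vanishing} distances: because $\phi$ is an inequality constraint rather than a transversal equation, the configuration space and the test map must be arranged so that the common distance produced is forced to be $0$, and this is precisely where the unavoidability of the skeleton enters, namely that every affine flat of dimension $d-k$ through an interior point of $P$ meets $P^{(k)}$. A cleaner alternative, which isolates the topology into a single lemma, is induction on $n$: peel off one point $p_1\in P^{(k)}$ together with a face $G$ with $\dim G\le d-k$ that carries the remaining average $\tfrac{1}{n-1}(np-p_1)$, then apply the inductive hypothesis to $G$; here the bound reappears as $(n-1)k\ge d-k$, and the base case $n=1$ is immediate since then $k\ge d$ and $P^{(k)}=P$. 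In either route the heart of the matter is the same one-step, Borsuk--Ulam-flavoured fact generalizing the elementary observation that every interior point of a convex polygon is the midpoint of some boundary chord.
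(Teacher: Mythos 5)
Your framework is in essence the paper's own: constrain the configuration space by the barycenter condition, then use an $\Sym_n$-equivariant distance-equalizing map into $W_n=\{x\in\R^n:\sum x_i=0\}$ plus a Dold-type theorem. But the step you flag as ``the main obstacle'' is not a technical wrinkle to be arranged later; it is the entire content of the proof, and your sketch does not contain the idea that resolves it. On your configuration space $X=\{(p_1,\dots,p_n)\in P^n:\sum_i p_i=np\}$ no Borsuk--Ulam statement can force the common distance to vanish: the diagonal configuration $(p,\dots,p)$ lies in $X$, is fixed by $\Sym_n$, and is already a zero of your test map (all distances to $P^{(k)}$ are equal), yet for interior $p$ it yields nothing. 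Equivariant topology cannot distinguish this useless zero from a useful one, so the domain itself must be cut down. The paper's solution (Dobbins's idea) is: translate $p$ to the origin, assume it is interior, set $C:=P^n\cap W_n^{\oplus d}$, a polytope of dimension $(n-1)d$, and run the test map not on $C$ but on its $(n-1)$-skeleton $C^{(n-1)}$. This subcomplex is still $(n-2)$-connected (it is homotopy equivalent to a wedge of $(n-1)$-spheres), so Dold's lemma applies to the subgroup $\Z/n\le\Sym_n$ for $n$ prime; and---this is exactly where $nk\ge d$ enters---every point of $C^{(n-1)}$ automatically has some coordinate in $P^{(k)}$: if each $x_i$ lay in a minimal face $\sigma_i$ of $P$ with $\dim\sigma_i\ge k+1$, the face $(\sigma_1\times\dots\times\sigma_n)\cap W_n^{\oplus d}$ of $C$ containing the configuration would have dimension at least $n(k+1)-d\ge n$, contradicting membership in the $(n-1)$-skeleton. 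This unavoidability property is what feeds into Theorem~\ref{thm:constrained-barycenter}. Your flat-intersection fact (every $(d-k)$-flat through an interior point meets $P^{(k)}$) is true but concerns a single point, not a configuration, and does not plug into the equivariant scheme. You also never address freeness: $\Z/n$ acts freely on $W_n\setminus\{0\}$ only for $n$ prime, so one must prove the prime case first and handle general $n$ by iterating over prime divisors.

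Your proposed ``cleaner alternative'' is worse than incomplete: it is false. Take $P=[-1,1]^3$, $p=0$, $n=3$, $k=1$ (so $nk=3\ge d=3$). Peeling off would require $p_1\in P^{(1)}$ and a proper face $G$ with $\dim G\le d-k=2$ containing $q=\tfrac{1}{n-1}(np-p_1)=-\tfrac{1}{2}p_1$. But every point of the $1$-skeleton of the cube has at least two coordinates equal to $\pm1$, so $-\tfrac{1}{2}p_1$ lies in $[-1/2,1/2]^3$, hence in the interior of $P$ and in no proper face; no such pair $(p_1,G)$ exists. Yet the theorem holds here, e.g.\ $(1,1,0)+(-1,0,1)+(0,-1,-1)=0$ with all three summands on edges. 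The weighted two-point statement your induction needs (one point in $P^{(k)}$, one in $P^{(d-k)}$, with coefficients $\tfrac1n$ and $\tfrac{n-1}{n}$) is an instance of the inhomogeneous Problem stated in the paper, and this example shows it can simply fail---which is precisely why the proof must move all $n$ points simultaneously rather than peel them off one at a time.
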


More generally, one could ask for a characterization of all possibly inhomogeneous dimensions of skeleta and barycentric coordinates:

\begin{problem}
	For given positive integers $d$ and $n$ characterize the dimensions $d_1, \dots, d_n \ge 0$ and coefficients $\lambda_1, \dots, \lambda_n \ge 0$ with $\sum \lambda_i = 1$ such that for any $d$-polytope $P$ there are $n$ points $p_1 \in P^{(d_1)}, \dots, p_n \in P^{(d_n)}$ with $p = \lambda_1p_1 + \dots + \lambda_np_n$.
\end{problem}

Some remarks pertaining to this more general problem are contained in \cite{dobbins2014point}. Still of greater generality is the problem of characterizing the subsets $F \subseteq \R^d$ that contain a set of $n$ not necessarily distinct vectors $p_1, \dots, p_n \in F$ with $p_1 + \dots + p_n = 0$. In other words we are interested in those sets $F$ which contain a not necessarily embedded $(n-1)$-simplex with barycenter at the origin. This is equivalent to $F^n \subseteq \R^{d\times n}$ having non-empty intersection with $W_n^{\oplus d}$, where $W_n := \{(x_1, \dots, x_n) \in \R^n \: | \: \sum x_i = 0\}$. If $F$ is a set of dimension $k$ with $kn < d$, then generically this intersection is empty for dimension reasons. 

For the proof of Theorem \ref{thm} we will need the following lemma.

\begin{lemma}[Dold \cite{dold1983simple}, see also Matou\v{s}ek \cite{matousek:borsuk-ulam2}]\label{lem:dold}
	Let a non-trivial finite group $G$ act on an $n$-connected CW-complex $T$ and act linearly on an $(n+1)$-dimensional real vector space $V$. Suppose that the action of $G$ on $V\setminus\{0\}$ is free. Then any $G$-equivariant map $\Phi\colon T \to V$ has a zero.
\end{lemma}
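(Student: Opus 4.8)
The plan is to argue by contraposition, converting the non-existence of a zero into the vanishing of an Euler class that can be shown \emph{independently} to be non-zero. So suppose that $\Phi\colon T\to V$ has no zero. Then $\Phi$ corestricts to a $G$-equivariant map $T\to V\setminus\{0\}$; since $G$ acts freely on $V\setminus\{0\}$, which $G$-equivariantly deformation retracts onto the unit sphere $S(V)\cong S^n$, we are effectively dealing with a free $G$-action on an $n$-sphere. The idea is to pass to Borel constructions over the classifying space $BG$ and to extract a contradiction from the tension between the $n$-connectivity of the domain $T$ and the $n$-dimensionality of the free quotient $S(V)/G$.

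First I would form the rank-$(n+1)$ real vector bundle $\xi = EG\times_G V \to BG$ associated with the linear $G$-action on $V$, together with its pullback $\zeta := \pi^*\xi$ along the projection $\pi\colon EG\times_G T \to BG$. The equivariant map $\Phi$, being nowhere zero, induces a nowhere-zero section of $\zeta$, so its Euler class vanishes: $e(\zeta)=\pi^*e(\xi)=0$. Second, I would exploit that $T$ is $n$-connected: in the Serre spectral sequence of the fibration $T\to EG\times_G T \xrightarrow{\ \pi\ } BG$, taken with $\mathbb{F}_p$-coefficients for a prime $p$ dividing $|G|$, the fibre contributes only in degree $0$ up through total degree $n$, so no differential can hit the bottom row $E_r^{\ast,0}$ in total degree $\le n+1$. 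Hence $E_2^{n+1,0}=E_\infty^{n+1,0}$ and the edge homomorphism $\pi^*\colon H^{n+1}(BG;\mathbb{F}_p)\to H^{n+1}(EG\times_G T;\mathbb{F}_p)$ is injective. Combining the two facts forces $e(\xi)=0$ in $H^{n+1}(BG;\mathbb{F}_p)$.

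The heart of the matter, and the step I expect to be the main obstacle, is then to contradict this by showing that $e(\xi)$ is in fact \emph{non-zero}. Here I would again use freeness: restricting to a subgroup $\Z/p\le G$ of prime order $p$ (which exists by Cauchy) yields a fixed-point-free linear $\Z/p$-representation $V$, whose associated sphere bundle over $B\Z/p$ has non-zero Euler class in $H^{n+1}(B\Z/p;\mathbb{F}_p)\cong\mathbb{F}_p$. This is exactly the computation underlying the classical Borsuk--Ulam theorem: for $p=2$ the antipodal action gives the top power $t^{n+1}$ in $H^*(B\Z/2;\mathbb{F}_2)=\mathbb{F}_2[t]$, and for odd $p$ it is the corresponding non-zero product of the Euler classes of the two-dimensional rotation summands. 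By naturality of the Euler class under the restriction map $H^{n+1}(BG;\mathbb{F}_p)\to H^{n+1}(B\Z/p;\mathbb{F}_p)$, the restriction of $e(\xi)$ is non-zero, hence $e(\xi)\ne 0$, contradicting the previous paragraph.

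Conceptually, the Gysin sequence of $\xi$ packages this obstruction cleanly: freeness makes $EG\times_G S(V)\simeq S(V)/G$ an $n$-dimensional complex, so $H^{n+1}(S(V)/G;\mathbb{F}_p)=0$, which through the sequence forces $e(\xi)$ to generate $H^{n+1}(BG;\mathbb{F}_p)$ and in particular to be non-zero once one knows $H^{n+1}(BG;\mathbb{F}_p)\ne 0$. The delicate point throughout is keeping the coefficients at $\mathbb{F}_p$ so that both the spectral-sequence injectivity and the cyclic-subgroup Euler-class computation are available simultaneously; once that bookkeeping is in place, the argument is the natural Borel-construction generalization of Borsuk--Ulam.
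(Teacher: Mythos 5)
First, a point of orientation: the paper does not prove this lemma at all --- it is quoted as a known result of Dold, with Matou\v{s}ek's book as a secondary reference, and is used as a black box in the proof of Theorem~4. So there is no ``paper proof'' to match your argument against; the comparison is with the classical proofs. Your Borel-construction argument is correct in outline and is essentially the standard modern treatment: the injectivity of $\pi^*\colon H^{n+1}(BG;\mathbb{F}_p)\to H^{n+1}(EG\times_G T;\mathbb{F}_p)$ coming from $n$-connectivity of the fibre plays the role of ``the cohomological index of an $n$-connected space exceeds $n$,'' and your Euler-class computation for a free $\Z/p$-representation plays the role of ``the index of a free $n$-dimensional space is at most $n$,'' which is exactly the tension exploited in Dold's original, somewhat more elementary, argument. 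One genuine advantage of your version is that it never uses freeness of the $G$-action on $T$ (the Borel construction absorbs non-free actions), which matches the lemma as stated here; Dold's theorem in its usual formulation assumes the domain action is free, and deducing the non-free statement from it requires the small extra step of replacing $T$ by $T\times EG$.

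The one loose end is orientability. For odd $p$, the Euler class of $\xi = EG\times_G V$ in $H^{n+1}(BG;\mathbb{F}_p)$ is only defined once $\xi$ is oriented, i.e.\ once $G$ acts on $V$ preserving orientation, and you never verify this; as written, your vanishing step $e(\xi)=0$ is performed for the full group $G$, where orientability is not obvious. The cheapest fix is to reorder: invoke Cauchy's theorem at the \emph{start} and replace $G$ by a subgroup $\Z/p$ (legitimate, since a $G$-equivariant map is in particular $\Z/p$-equivariant, freeness on $V\setminus\{0\}$ restricts to subgroups, and the hypotheses on $T$ do not involve the group). Then for odd $p$ a free $\Z/p$-representation is a direct sum of two-dimensional rotation representations, hence carries a complex structure and a canonical orientation (in particular $n+1$ is even), and your computation $e(\xi)=\bigl(\prod_i j_i\bigr)u^{(n+1)/2}\neq 0$ goes through; for $p=2$ work with the top Stiefel--Whitney class $w_{n+1}(\xi)\in H^{n+1}(B\Z/2;\mathbb{F}_2)$ throughout, for which no orientation is needed. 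With that adjustment the proof is complete and correct.
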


Let from now on the symmetric group $\Sym_n$ act on the space of matrices $\R^{d\times n}$ by permuting columns.

\begin{theorem}\label{thm:constrained-barycenter}
	Let $n$ be prime, $d \ge 1$ be an integer, and $F \subseteq \R^d$ be closed. If there is an $(n-2)$-connected, $\Sym_n$-invariant subset $Q \subseteq W_n^{\oplus d} = \{(x_1, \dots, x_n) \in \R^{d\times n} \: | \: \sum x_i = 0\}$ such that for each $(x_1, \dots, x_n) \in Q$ there is an $i$ with $x_i \in F$, then there are $p_1, \dots, p_n \in F$ with $p_1 + \dots + p_n = 0$.
\end{theorem}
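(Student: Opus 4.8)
The plan is to set up a distance test map and apply Dold's Lemma (Lemma~\ref{lem:dold}) to force all the relevant distances to vanish simultaneously. Since $F$ is closed, the function $x \mapsto \dist(x,F)$ is continuous and vanishes exactly on $F$. I would define
\[
\Phi\colon Q \to \R^n, \qquad \Phi(x_1, \dots, x_n) = \bigl(\dist(x_1,F), \dots, \dist(x_n,F)\bigr).
\]
Letting $\Sym_n$ act on $\R^n$ by permuting coordinates, the map $\Phi$ is $\Sym_n$-equivariant, because the $\Sym_n$-action on $Q$ permutes the columns $x_i$. The hypothesis that every $(x_1, \dots, x_n) \in Q$ has some $x_i \in F$ says precisely that $\Phi$ never lands in the open positive orthant: at least one coordinate of $\Phi(x)$ is always zero.

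The goal is a point $x \in Q$ with $\Phi(x) = 0$, for then every $x_i$ lies in $F$ (using that $F$ is closed), while $\sum_i x_i = 0$ holds automatically since $x \in Q \subseteq W_n^{\oplus d}$. To produce such a point I would compose $\Phi$ with the orthogonal projection $\pi\colon \R^n \to W_n$, $\pi(y) = y - \tfrac{1}{n}\bigl(\sum_i y_i\bigr)(1, \dots, 1)$, and set $\Psi := \pi \circ \Phi\colon Q \to W_n$. This map is again equivariant, and a zero of $\Psi$ is a point at which $\Phi(x)$ lies in $\ker \pi$, the span of $(1, \dots, 1)$; that is, all the distances $\dist(x_i,F)$ are equal, to a common value $c \ge 0$. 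Combined with the hypothesis that some coordinate of $\Phi(x)$ always vanishes, this forces $c = 0$ and hence $\Phi(x) = 0$. Thus the entire problem reduces to producing a zero of $\Psi$.

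This is where Dold's Lemma enters, and also where the main obstacle lies: $\Sym_n$ does \emph{not} act freely on $W_n \setminus \{0\}$ (any transposition fixes a diagonal hyperplane), so the lemma cannot be applied to the full symmetric group. I would restrict the action to the cyclic subgroup $\Z/n \le \Sym_n$ generated by the cyclic shift $(1\,2\,\cdots\,n)$. Here the primality of $n$ is essential: a nontrivial power of the cyclic shift fixes a vector in $W_n$ only if all of its coordinates coincide, which in $W_n$ means the vector is $0$; hence $\Z/n$ acts freely on $W_n \setminus \{0\}$ exactly because $n$ is prime. Now apply Lemma~\ref{lem:dold} with $G = \Z/n$, $T = Q$, and $V = W_n$: the space $Q$ is $(n-2)$-connected and $\dim W_n = n-1$, so the connectivity and dimension hypotheses match, and $\Psi$ is $\Z/n$-equivariant. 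The lemma then yields the desired zero of $\Psi$.

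I would flag one technical point to verify: Lemma~\ref{lem:dold} is stated for a CW-complex $T$, so one should ensure that $Q$ carries (or may be taken with) a suitable $\Z/n$-CW structure; in the intended applications $Q$ will be a polyhedral complex, so this causes no difficulty. The conceptual heart of the argument is the passage from \emph{at least one} $x_i \in F$ to \emph{all} $x_i \in F$ via the equal-distances conclusion, together with the recognition that freeness of the action on $W_n \setminus \{0\}$ — hence the hypothesis that $n$ is prime — is exactly what is required to invoke Dold's Lemma on the standard representation $W_n$.
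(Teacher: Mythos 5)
Your proposal is correct and follows essentially the same route as the paper's proof: the distance test map $(x_1,\dots,x_n) \mapsto (\dist(x_1,F),\dots,\dist(x_n,F))$, projection onto $W_n$, and Dold's Lemma applied to the cyclic subgroup $\Z/n \le \Sym_n$ (with primality of $n$ giving the free action on $W_n\setminus\{0\}$), concluding that all distances are equal and hence zero. Your additional remarks --- spelling out why primality yields freeness and flagging the CW-structure requirement on $Q$ --- are points the paper leaves implicit, but the argument is the same.
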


\begin{proof}
	The map $\Psi\colon Q \to \R^n, (x_1, \dots, x_n) \mapsto (\dist(x_1, F), \dots, \dist(x_n, F))$ is $\Sym_n$-equivariant. Denote the diagonal by $D := \{(y_1, \dots, y_n) \in \R^n \: | \: y_1 = \dots = y_n\}$. The map $\Psi$ induces by projection an $\Sym_n$-equivariant map $\Phi\colon~Q~\to~D^\perp = W_n$. The vector space $W_n$ is $(n-1)$-dimensional and $Q$ is $(n-2)$-connected, so $\Phi$ has a zero by Lemma \ref{lem:dold} applied to the subgroup $\Z/n$ of $\Sym_n$, which acts freely on $W_n \setminus \{0\}$. Let $(p_1, \dots, p_n) \in Q$ with $\Phi(p_1, \dots, p_n) = 0$. This is equivalent to $\dist(p_1, F) = \dots = \dist(p_n, F)$. There is an index $i$ such that $\dist(p_i,F) = 0$, or equivalently $p_i \in F$, since $F$ is closed. Thus all $p_j$ are in $F$. Since $Q \subseteq W_n^{\oplus d}$ we have $p_1 + \dots + p_n = 0$.
\end{proof}

This theorem can be extended to the case that $n$ is a prime power using a generalization of Dold's theorem to elementary Abelian groups; for a rather general version see \cite{blagojevic2012equivariant}.

For the proof of Theorem \ref{thm} we take for $Q$ a skeleton of a certain polytope. Before proceeding we point out that the following na\"ive approach using a configuration space/test map scheme to prove Theorem \ref{thm:dobbins} fails: the origin is the barycenter of $n$ points in the $d$-skeleton of $P$ if and only if the $\Sym_n$-equivariant map $(P^{(d)})^n~\to~\R^{dn}$, $(x_1, \dots, x_n) \mapsto \tfrac{1}{n}(x_1 + \dots + x_n)$ maps some point in $(P^{(d)})^n$ to $0$. However, for this map to be equivariant the $\Sym_n$-action on $\R^{dn}$ must be trivial, and thus $\Sym_n$-equivariant maps $(P^{(d)})^n \to \R^{dn}\setminus\{0\}$ exist. Dobbins's novel idea was to intersect with a test space in the domain to avoid this problem. We will employ that same idea. In contrast to Dobbins we need no requirement of general position. Thus we also do not need any kind of approximation in the proof.

\begin{proof}[Proof of Theorem \ref{thm}]
	We can assume that $p = 0$ is in the interior of $P$, otherwise we could restrict to a proper face of $P$ with the origin in its relative interior. Let first $n$ be prime. Consider again the linear space $W_n^{\oplus d} = \{(x_1, \dots, x_n) \in \R^{d\times n} \: | \: \sum x_i = 0\}$ of codimension $d$. Then $C := P^n \cap W_n^{\oplus d}$ is a polytope of dimension $(n-1)d$. The $(n-1)$-skeleton $C^{(n-1)}$ of $C$ is homotopy equivalent to a wedge of $(n-1)$-spheres and thus is $(n-2)$-connected.
	
	Let $(x_1, \dots, x_n) \in C^{(n-1)}$. By Theorem \ref{thm:constrained-barycenter} we need to show that one $x_i$ lies in $P^{(k)}$. Suppose for contradiction that $x_i \notin P^{(k)}$ for all $i = 1, \dots, n$. For each $x_i$ let $\sigma_i$ be the inclusion-minimal face of $P$ with $x_i \in \sigma_i$. We have that $\dim \sigma_i \ge k+1$. Each face of $C$ is of the form $(\tau_1 \times \dots \times \tau_n) \cap W_n^{\oplus d}$ with the $\tau_i$ faces of $P$. The point $(x_1, \dots, x_n)$ lies in the face $(\sigma_1 \times \dots \times \sigma_n) \cap W_n^{\oplus d}$ but in no proper subface. The dimension is $\dim \left((\sigma_1 \times \dots \times \sigma_n) \cap W_n^{\oplus d}\right) \ge n(k+1)-d\ge n$. Thus $(\sigma_1 \times \dots \times \sigma_n) \cap W_n^{\oplus d}\notin C^{(n-1)}$, which is a contradiction. The case for general $n$ follows by a simple iteration with respect to prime divisors, as in \cite{dobbins2014point}.
\end{proof}

\linespread{1.0}
\setlength{\parskip}{0cm}

\small
\providecommand{\noopsort}[1]{}
\providecommand{\bysame}{\leavevmode\hbox to3em{\hrulefill}\thinspace}
\providecommand{\MR}{\relax\ifhmode\unskip\space\fi MR }
\providecommand{\MRhref}[2]{%
  \href{http://www.ams.org/mathscinet-getitem?mr=#1}{#2}
}
\providecommand{\href}[2]{#2}

\end{document}